\definecolor{marin}{rgb}   {0.,   0.3,   0.7} 
\definecolor{rouge}{rgb}   {0.8,   0.,   0.} 
\definecolor{sepia}{rgb}   {0.8,   0.5,   0.} 
\newcommand\N{\mathbb{N}}
\newcommand\T{\mathbb{T}}
\newcommand\Z{\mathbb{Z}}
\newcommand\R{\mathbb{R}}
\newcommand\C{\mathbb{C}}
\newcommand{\dd}{\mathrm{d}}
\newcommand\eps{\varepsilon}
\newcommand\Id{\mathrm{Id}}
\newcommand\sinc{\mathrm{sinc}}
\newtheorem{theorem}{Theorem}
\newtheorem{proposition}{Proposition}
\newtheorem{lemma}{Lemma}
\newtheorem{conjecture}{Conjecture}
\begin{document}

\title{Continuum limit of the discrete nonlinear Klein-Gordon equation}

\author{Quentin Chauleur}
\address{INRIA Lille, Univ Lille \& Laboratoire Paul Painlevé,
CNRS UMR 8524 Lille, Cité Scientifique, 59655 Villeneuve-d'Ascq, France. }

\begin{abstract} 
We study the convergence of solutions of the discrete nonlinear Klein-Gordon equation on an infinite lattice in the continuum limit, using recent tools developed in the context of nonlinear discrete dispersive equations. Our approach relies in particular on the use of bilinear estimates of the Shannon interpolation alongside controls on the growth of discrete Sobolev norms of the solution. We conclude by giving perspectives on uniform dispersive estimates for nonlinear waves on lattices. 
\end{abstract}

\maketitle
We consider the discrete nonlinear Klein-Gordon equation
\begin{equation} \label{DNLKG} \tag{DNLKG}
\partial_t^2 u - \Delta_h u + u +  |u|^{p-1}u=0
\end{equation}
on a lattice $h\Z^d$ of step size $h>0$ with initial condition $u(0)=u_0$ and $\partial_t u(0)=u_1$. Here 
\[ \Delta_h u(a)=\sum_{j=1}^d \frac{u(a+h e_j)+ u(a-h e_j)-2 u(a)}{h^2}, \quad a \in h\Z^d,  \]
denotes the usual discrete Laplace operator which accounts for nearest neighbor interactions, with $(e_j)_{1\leq j \leq d}$ the canonical basis on $\R^d$. We restrict our attention to the dimensional cases $1 \leq d \leq 3$. 

Nonlinear waves in lattices \cite{kevrekidis2011} has received a lot of interest since the seminal investigation of Fermi,
Pasta and Ulam \cite{FPU1955} on the FPUT model alongside the development of soliton theory and integrable systems such as the Toda lattice \cite{toda1989} or the Ablowitz-Laddik equation \cite{ablowitz1975}. Discrete Klein-Gordon equations also naturally arise in the physics literature in order to describe Fluxon dynamics in one dimension parallel array of Josephson junctions \cite{ustinov1993}, or as a model for local denaturation of DNA \cite{peyrard1989}. Equation \eqref{DNLKG} enjoys the following energy conservation law
\begin{equation} \label{eq_energy_conservation}
E(t)\coloneqq \frac12 \| \partial_t u \|_{L^2_h}^2 +\frac12 \| \nabla_h u \|_{L^2_h}^2+ \frac12 \| u \|_{L^2_h}^2 + \frac{1}{p+1} \| u \|_{L^{p+1}_h}^{p+1} = E(0).
\end{equation}   
Our analysis will be performed on a particular set of parameters $p$ and $d$ satisfying
 \begin{equation} \label{param} \tag{param}
 \left\{
 \begin{aligned}
 & 1<p \quad & \text{for}  \ d=1,2,\\
 & 1<p<3 \quad & \text{for} \ d=3, 
 \end{aligned}
 \right.
 \end{equation}
which will imply from energy conservation the uniform bound $\| (u,\partial_t u ) \|_{H^1_h \times L^2_h} \leq C_{d}$ with respect both to the time $t \geq 0$ and the size of the lattice $h>0$, a feature that we will extensively use throughout this work. In particular, we will be interested in the limit $h \rightarrow 0$ of equation \eqref{DNLKG}, usually referred as the \textit{continuum limit}. From this perspective, equation \eqref{DNLKG} can also be seen as a finite difference scheme
for the numerical simulation of the well-known nonlinear Klein-Gordon equation
\begin{equation} \label{NLKG} \tag{NLKG}
\partial_t^2 \phi - \Delta \phi + \phi + |\phi|^{p-1} \phi=0,
\end{equation}
and one can reasonably ask in which framework and at which rate solutions of the discrete equation \eqref{DNLKG} converge to the ones of the continuous equation \eqref{NLKG}. We briefly recall that equation \eqref{NLKG} is a fundamental model in mathematical physics and has been extensively studied in a large amount of literatures.
It has been used as the equation of classical neutral scalar mesons, but also to study bosonic phases in massive stars in connection with Bose-Einstein condensation \cite{megias2022}. It also appears as a superfluid model to describe the creation and dynamics of quantized vortices in galaxies \cite{mauser2020}. We finally mention its link with nonlinear Schrödinger equations in the non-relativistic regime \cite{masmoudi2002,machihara2002}.

Other the past ten years, there have been many recent advances in the study of the continuum limit for dispersive equations since the work of Kirkpatrick, Lenzmann and Staffilani \cite{staffilani2013}, where the authors show the $L^2$ weak convergence of solutions of the discrete nonlinear Schrödinger equation (DNLS) in the continuum limit. The $L^2$ strong convergence of such solutions were then achieved by Hong and Yang in \cite{hong2019strong} alongside precise convergence rates in $h$, a result which was recently extended by Choi and Aceves \cite{choi2023} to the fractional nonlinear Schrödinger equation. However, to the best of the author's knowledge, no results are known for the continuum limit of the discrete nonlinear Klein Gordon, which is the main purpose of this paper.

This work follows from the conference \textsc{CJC-MA} which held in CentraleSupelec in September 2023, during which the author presented his recent study on discrete nonlinear Schrödinger equations \cite{chauleur2023dnls}, and announced that the strategy developed in this paper can be used for other dispersive equations. Note that contrary to this prior work, no uniform dispersive properties such as Strichartz estimates are used throughout the forthcoming proof, and comments about this feature will be developed at the end of this paper.

This paper is organized as follows. In section \ref{section_result}, we give some notations for discrete functional analysis and we state our main result Theorem \ref{theorem_continuum_limit}. Section \ref{section_growth} is devoted to the proof of uniform bounds on the growth of discrete Sobolev norms of solutions to \eqref{DNLKG}, and Theorem \ref{theorem_continuum_limit} is then proven in section \ref{section_continuum}. We conclude this work with section \ref{section_survey}, where we survey recent advances in nonlinear dispersive lattice equations and give perspectives.

\section{Discrete framework and main result} \label{section_result}

We denote respectively by $L^p(h\Z^d)$, for $1 \leq p < \infty$, and $L^{\infty}(h \Z^d)$ (or sometimes more compactly $L^p_h$ and $L^{\infty}_h$ in mathematical mode) the discrete Lebesgue spaces induced by the norms
\[ \| g \|_{L^p_h}^p = h^d \sum_{a \in h\Z^d} |g(a)|^p \quad \text{and} \quad \| g \|_{L^{\infty}_h} = \sup_{a \in h\Z^d} |g(a)|  . \]
One can also define the \textit{forward discrete gradient} 
\[ \nabla_{h,j}^+ g(a)=\frac{g(a+he_j)-g(a)}{h}, \quad \nabla_h^+ = \left( \nabla_{h,1}^+,\ldots,\nabla_{h,d}^+  \right)^{\top} \]
for any $a \in h \Z^d$, as well as the \textit{discrete gradient}
\[ \nabla_{h,j} g(a)=\frac{g(a+he_j)-g(a-h e_j)}{h}, \quad \nabla_h = \left( \nabla_{h,1},\ldots,\nabla_{h,d}  \right)^{\top}. \]
The discrete Fourier transform of a function $g \in L^2(h \Z^d)$ and its inversion formula are  given by
\[ \widehat{g} (\xi) = h^d \sum_{a \in h \Z^d} g(a) e^{-ia \cdot \xi} \quad \text{and} \quad g(a)= \frac{1}{(2 \pi)^d}\int_{\T_h^d} \widehat{g}(\xi) e^{ia \cdot \xi} \dd \xi \]
for $ \xi \in \T_h^d=\R^d / \left(\frac{2 \pi}{h} \Z^d \right)$ and $a \in h\Z^d$, defining an isometry from $L^2(h\Z^d)$ to $L^2(\T_h^d)$. One can also define  discrete Sobolev spaces $H^s(h\Z^d)$ (or $H^s_h$) for any $s \in \R$ with the norm
\[ \| g \|^2_{H^s_h} =  \frac{1}{(2 \pi) ^d} \int_{\T_h^d} \left( 1+\frac{4}{h^2} \sum_{j=1}^d \sin  \left( \frac{h \xi_j }{2}  \right)^2 \right)^s \left| \widehat{u}( \xi ) \right|^2 \dd \xi. \]
In order to compare discrete and continuous functions, we need to define a projection operator on the grid and an interpolation operator to lift a discrete function on the continuous space. We denote the \textit{mean projection}
\[ \pi_h \varphi (a) =  \frac{1}{h^d} \int_{a+ \left[ -\frac{h}{2}, \frac{h}{2} \right[^d} \varphi(x) \dd x \]
for all $a \in h \Z^d$. We also define the \textit{Shannon interpolation} of a function $u : L^2(h\Z^d) \rightarrow \C$ by
\[    \mathcal{S}_h u \coloneqq \mathcal{F}^{-1} \left( \mathbf{1}_{\T_h^d} \widehat{u}  \right),  \]
which allows to extend a discrete function into a real function whose Fourier transform is compactly supported in $\T^d_h$, and where $\mathcal{F}$ defined by 
\[ \mathcal{F} f(\xi) = \int_{\R^d} f(x)e^{-ix\cdot \xi} \dd x  \]
for all $\xi \in \R^d$ denotes the usual Fourier transform on $\R^d$. We now state our main result:

\begin{theorem} \label{theorem_continuum_limit}
Let $s \in \N^*$ with $(p,d)$ satisfying \eqref{param}. Let $\phi \in \mathcal{C}(\R;H^{s+2}(\R^d))$ be the unique solution of \eqref{NLKG} with initial condition $(\phi_0,\phi_1) \in H^{s+2}(\R^d) \times H^{s+1}(\R^d)$, and let $u$ be the unique solution of \eqref{DNLKG} with initial condition $(u_0,u_1)=(\pi_h \phi_0,\pi_h \phi_1)$. Then
\[ \| \mathcal{S}_h u(t) - \phi(t) \|_{H^s(\R^d)} \leq C h e^{B(1+t)^{(p-1)}}   \| \mathcal{S}_h u_0 - \phi_0 \|_{H^s(\R^d)}, \]
where $B$ and $C$ are constants depending on $d$, $p$, $s$ and $\|(\phi_0,\phi_1) \|_{H^{s+2}(\R^d) \times H^{s+1}(\R^d)}$.	
\end{theorem}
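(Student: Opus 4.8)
The plan is to treat $v:=\mathcal{S}_h u$ as an approximate solution of \eqref{NLKG} and to control the remainder $w:=v-\phi$ by the energy method for the Klein--Gordon operator together with Gronwall's lemma. Because $\mathcal{S}_h$ is linear and commutes with $\partial_t$ and with Fourier multipliers supported in $\T_h^d$, applying it to \eqref{DNLKG} and using $v=\mathcal{S}_h u$ yields
\begin{equation*}
\partial_t^2 v-\Delta v+v+|v|^{p-1}v = R_1+R_2,
\end{equation*}
where $R_1:=\mathcal{S}_h(\Delta_h u)-\Delta(\mathcal{S}_h u)$ is the Laplacian symbol discrepancy and $R_2:=|v|^{p-1}v-\mathcal{S}_h\!\left(|u|^{p-1}u\right)$ measures the failure of $\mathcal{S}_h$ to commute with the nonlinearity. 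Subtracting \eqref{NLKG}, the remainder solves the forced Klein--Gordon equation
\begin{equation*}
\partial_t^2 w-\Delta w+w = R_1+R_2-\left(|v|^{p-1}v-|\phi|^{p-1}\phi\right),
\end{equation*}
with $w(0)=\mathcal{S}_h u_0-\phi_0$ and $\partial_t w(0)=\mathcal{S}_h u_1-\phi_1$. I would then apply $\langle\nabla\rangle^s$, pair with $\partial_t w$, and set $\mathcal{E}_s(t):=\tfrac12\|\partial_t w\|_{H^s}^2+\tfrac12\|w\|_{H^{s+1}}^2$, so that $\tfrac{\dd}{\dd t}\mathcal{E}_s^{1/2}\lesssim \|R_1\|_{H^s}+\|R_2\|_{H^s}+\left\||v|^{p-1}v-|\phi|^{p-1}\phi\right\|_{H^s}$.

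The linear term $R_1$ is the routine part. On the Fourier side its symbol is $\left(|\xi|^2-\tfrac{4}{h^2}\sum_j\sin^2(h\xi_j/2)\right)\widehat{u}(\xi)$, supported in $\T_h^d$, and a Taylor expansion gives the pointwise bound $\left||\xi|^2-\tfrac{4}{h^2}\sum_j\sin^2(h\xi_j/2)\right|\lesssim h^2|\xi|^4$. Combined with the equivalence of the discrete and continuous norms on $\T_h^d$, this bounds $\|R_1\|_{H^s}$ by $h$ times a higher discrete Sobolev norm of $u$, which is kept under control (with at most polynomial growth in $t$) by the estimates of Section \ref{section_growth}; this is essentially where the extra derivatives in the hypothesis $\phi\in H^{s+2}(\R^d)$ are spent.

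The genuine difficulty is $R_2$. Since $\mathcal{S}_h$ does not commute with pointwise multiplication, $\left\||v|^{p-1}v-\mathcal{S}_h(|u|^{p-1}u)\right\|_{H^s}$ cannot be handled term by term and instead requires the bilinear---more precisely multilinear---estimates for the Shannon interpolation advertised in the abstract, which quantify the interpolation error of a product as $O(h)$ at the cost of additional derivatives. I expect this to be the main obstacle for two reasons. First, for non-integer $p$ the map $z\mapsto|z|^{p-1}z$ is only Hölder-regular, so the $H^s$ control demands fractional Leibniz and Moser-type estimates adapted to band-limited functions, valid in the regime $1\le d\le 3$ and $s\in\N^*$ through the available Sobolev embeddings. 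Second, these estimates consume several derivatives, so closing them forces one to invoke the higher discrete Sobolev bounds of Section \ref{section_growth}, and the whole scheme only closes because those norms grow at most polynomially in $t$.

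Finally, the Lipschitz term $|v|^{p-1}v-|\phi|^{p-1}\phi$ is estimated in $H^s$ by $C\left(\|v\|_{L^\infty}^{p-1}+\|\phi\|_{L^\infty}^{p-1}\right)\|w\|_{H^s}$ plus commutator contributions, again via Moser estimates. Feeding this back gives a differential inequality $\tfrac{\dd}{\dd t}\mathcal{E}_s^{1/2}\lesssim a(t)\,\mathcal{E}_s^{1/2}+C h$, where the coefficient $a(t)$ is controlled by the growth rates of Section \ref{section_growth}; arranging these rates so that $\int_0^t a\lesssim(1+t)^{p-1}$ and applying Gronwall's lemma produces the factor $e^{B(1+t)^{p-1}}$, the overall rate $h$, and the dependence on the initial discrepancy $\mathcal{E}_s(0)^{1/2}\sim\|\mathcal{S}_h u_0-\phi_0\|_{H^s(\R^d)}$ recorded in the statement. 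The constants $B$ and $C$ inherit their dependence on $d$, $p$, $s$ and $\|(\phi_0,\phi_1)\|_{H^{s+2}(\R^d)\times H^{s+1}(\R^d)}$ from the Moser constants and the Sobolev growth bounds.
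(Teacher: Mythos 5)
Your overall architecture differs from the paper's: you run an $H^s$ energy estimate on the difference equation for $w=\mathcal{S}_h u-\phi$, whereas the paper works from Duhamel's formula, splitting $\|\mathcal{S}_h u(t)-\phi(t)\|_{H^s}$ into five pieces (linear flows on the data, the propagator discrepancy applied to the nonlinearity, the aliasing term, and the Lipschitz difference). Your treatment of the nonlinear pieces is faithful to the paper's: the aliasing term $R_2$ is indeed handled by the bilinear Shannon interpolation estimate, the Lipschitz difference feeds into Gronwall with a coefficient controlled by the polynomial Sobolev growth bounds of Section \ref{section_growth}, and the conclusion follows. (Both you and the paper are somewhat cavalier about the exponent: a Gronwall coefficient of size $(1+\tau)^{p-1}$ integrates to $(1+t)^{p}$, not $(1+t)^{p-1}$, but that is a matter of renaming $B$ and of the paper's own bookkeeping.)

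The genuine gap is in your ``routine'' term $R_1=\mathcal{S}_h(\Delta_h u)-\Delta(\mathcal{S}_h u)$. Its symbol is $\sigma(\xi)=|\xi|^2-\tfrac{4}{h^2}\sum_j\sin^2(h\xi_j/2)$ on $\T_h^d$, and the Taylor bound $\sigma(\xi)\lesssim h^2|\xi|^4$ does \emph{not} yield $\|R_1\|_{H^s}\lesssim h\,\|u\|_{H^{s+2}_h}$: that would require the pointwise inequality $\sigma(\xi)\lesssim h\,(1+|\xi|^2)$ on all of $\T_h^d$, which fails at the top of the Brillouin zone (take $\xi=(\pi/h,0,\dots)$, where $\sigma=(\pi^2-4)/h^2$ while $h(1+|\xi|^2)\sim \pi^2/h$). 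The best uniform bound is $\sigma(\xi)\lesssim h(1+|\xi|^2)^{3/2}$, so your source term costs $h\,\|u\|_{H^{s+3}_h}$ --- one derivative more than the hypothesis $\phi_0\in H^{s+2}$ supplies uniformly in $h$ --- or else you lose the rate $h$. This is precisely why the paper does \emph{not} compare the generators but the propagators: in Proposition \ref{prop_linear_flow} the quantity $|\cos(t\omega_h(\xi))-\cos(t\omega(\xi))|$ is bounded by $2$ regardless of $\xi$, so the high-frequency region $|\xi|>\pi/\sqrt{h}$ can be absorbed by writing $2\lesssim h^2(1+|\xi|^2)^2\cdot(1+|\xi|^2)^{-2}\cdot h^{-2}\cdot h^2$, i.e.\ by trading the two extra derivatives already available for a factor $h^2$, while the Taylor/Lipschitz bound is used only on $|\xi|\le\pi/\sqrt h$ where $h^2|\xi|^4\le \pi^2 h|\xi|^2$. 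An unbounded source in an energy identity does not admit this low/high splitting, so your scheme does not close at the stated regularity. A secondary, fixable omission: your Gronwall input is $\mathcal{E}_s(0)^{1/2}$, which involves $\|w(0)\|_{H^{s+1}}$ and $\|\partial_t w(0)\|_{H^s}=\|\mathcal{S}_h\pi_h\phi_1-\phi_1\|_{H^s}$; these are not the quantity $\|\mathcal{S}_h u_0-\phi_0\|_{H^s}$ appearing in the statement and must be estimated separately via the projection--interpolation identity of Lemma \ref{lemma_proj_inter} (they are $O(h)$ under the stated hypotheses, so this does not break the rate, but it needs to be said).
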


\section{Growth of discrete Sobolev norms} \label{section_growth}

In this section we adapt the strategy of Pampu \cite{pampu2021} to infer upper bounds on the growth in time of Sobolev norms for the nonlinear Klein-Gordon equation to our discrete setting. This proof relies on the use of \textit{modified energies}, a strategy that have proved useful in various contexts and which have very little dependence on the underlying geometry of the problem, making it very appealing from the discrete point of view. Note that this strategy was already used by the author for the discrete nonlinear Schrödinger \cite{chauleur2023dnls}, inspired by the work of Planchon, Tzvetkov and Visciglia \cite{planchon2017}.

Let $T>0$, and $(u,\partial_t u) \in \mathcal{C}(\left[0,T \right];H^2_h \times H^1_h)$ be the solution of equation \eqref{DNLKG} with initial condition $(u_0,u_1) \in H^2_h \times H^1_h$. We give all the following proves in dimension $d=2$ with $1<p< \infty$, but the same arguments work in the three dimensional case $d=3$ (which is actually performed in \cite{pampu2021}) with nonlinearity $1<p<3$  relying on the Sobolev embedding $H^1_h \subset L^q_h$ for all $1\leq q\leq6$. Note also that the proof is a bit simpler in the case $d=1$, as we have $H^1_h \subset L^{\infty}_h$.

 In view of equation \eqref{DNLKG} and conservation of energy \eqref{eq_energy_conservation} alongside discrete Sobolev embeddings, we see that $\partial_t^2 u  \in \mathcal{C}(\left[0,T \right];L^2_h)$ and that
\begin{equation} \label{eq_partial_t_Delta}
\| \partial_t^2 u - \Delta u  \|_{L^2_h} \leq C
\end{equation}
where $C=C(d,E(0))>0$. We now define the modified energy
\begin{equation} \label{eq_modified_energy}
\mathcal{E}(t) \coloneqq \frac12 \left( \| \partial_t^2 u(t) \|^2_{L^2_h} + \| \nabla_h^+  \partial_t u(t) \|^2_{L^2_h} + \| \partial_t u(t) \|^2_{L^2_h} \right)
\end{equation} 
for all $t \in  \left[0,T \right]$. In particular differentiating with respect to time we infer that
\[ \frac{\dd}{\dd t} \mathcal{E}(t) = - \langle \partial_t(|u|^{p-1} u), \partial_t^2 u \rangle_h.   \]
Now integrating in time between $0$ and $T$, we get by Hölder's inequality that
\[ \mathcal{E}(T) - \mathcal{E}(0) \leq \int_0^T \sum_{h\Z^d} |u|^{p-1} | \partial_t u| | \partial_t^2 u| \leq \int_0^T \| u^{p-1}\|_{L^{\frac{2(2+\eta)}{\eta}}_h} \| \partial_t u \|_{L^{2+\eta}_h} \| \partial_t^2 u \|_{L^2_h}   \]
for any small $\eta>0$ yet to be fixed. Then from interpolation in discrete Lebesgue spaces alongside discrete Sobolev embedding $H^1_h \subset L^q_h$ for all~$q>1$, we get that
\[\mathcal{E}(T) - \mathcal{E}(0)  \leq \int_0^T \| u\|_{L^{\frac{2(2+\eta)(p-1)}{\eta}}_h}^{p-1} \| \partial_t u \|_{L^2_h}^{\frac{2-\eta^2}{2+\eta}} \| \partial_t u \|_{L^{1+\frac{1}{\eta}}_h}^{\frac{\eta(\eta+1)}{2+\eta}}   \| \partial_t^2 u \|_{L^2_h}  \lesssim T \| \partial_t u \|_{L^{\infty}_T H^1_h}^{\frac{\eta(\eta+1)}{2+\eta}}  \| \partial_t^2 u \|_{L^{\infty}_T L^2_h}, \] 
so denoting $\eps = 2\eta(\eta+1)/(2+\eta)>0$ as small as needed, we infer in view of \eqref{eq_modified_energy} that
\begin{equation} \label{eq_bound_mathcal_E}
\mathcal{E}(T) - \mathcal{E}(0) \lesssim T \sup_{t \in \left[0,T \right]} \mathcal{E}(t)^{\frac{1+ \eps}{2}} .
\end{equation}
This last equation will be useful to prove the following bound on the growth of the $H^2_h \times H^1_h$-norm of $(u,\partial_t u)$, using the equivalence between the $H^2_h \times H^1_h$ norm of $(u,\partial_t u)$ and $\mathcal{E}(t)$.

\begin{proposition} \label{prop_bound_H2}
Let $(u,\partial_t u)$ be solution to \eqref{DNLKG} with $(u_0,u_1) \in H^2_h \times H^1_h$, then for all $\eps>0$,
\[ \sup_{t \in \left[0,T\right] } \|(u,\partial_t u)(t)\|_{H^2_h \times H^1_h} \lesssim 
\left\{ \begin{aligned}
& (1+T) \quad &\text{if} \ d=1, \\
&(1+T)^{\frac{1}{1-\eps}} \quad &\text{if} \ d=2,\\
&(1+T)^{\frac{2}{3-p}} \quad & \text{if} \ d=3.
\end{aligned} \right.
 \]
\end{proposition}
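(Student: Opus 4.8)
\emph{Proof strategy.} The plan is to combine the integrated estimate \eqref{eq_bound_mathcal_E} with the equivalence between the modified energy $\mathcal{E}(t)$ and the squared norm $\|(u,\partial_t u)(t)\|^2_{H^2_h\times H^1_h}$, and then to close an algebraic inequality by a Young-type absorption. I would first record the equivalence. By Plancherel the Fourier multiplier of $\nabla_h^+$ is $\tfrac{4}{h^2}\sum_j\sin^2(h\xi_j/2)$, whence $\|\partial_t u\|^2_{L^2_h}+\|\nabla_h^+\partial_t u\|^2_{L^2_h}=\|\partial_t u\|^2_{H^1_h}$ exactly, while the same symbol gives $\|u\|_{H^2_h}=\|(\Id-\Delta_h)u\|_{L^2_h}$. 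Trading $\|\Delta_h u\|_{L^2_h}$ for $\|\partial_t^2 u\|_{L^2_h}$ via \eqref{eq_partial_t_Delta} and bounding $\|u\|_{L^2_h}$, $\|\partial_t u\|_{L^2_h}$ by the conserved energy, I obtain
\[ \tfrac12\|(u,\partial_t u)(t)\|^2_{H^2_h\times H^1_h}\lesssim \mathcal{E}(t)+C\lesssim \|(u,\partial_t u)(t)\|^2_{H^2_h\times H^1_h}+C \]
with $C=C(d,E(0))$. It then suffices to bound $M(T)\coloneqq\sup_{t\in[0,T]}\mathcal{E}(t)$, which is finite since $(u,\partial_t u)\in\mathcal{C}([0,T];H^2_h\times H^1_h)$.

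For $d=2$, applying \eqref{eq_bound_mathcal_E} on each subinterval $[0,t]$ gives $\mathcal{E}(t)\leq\mathcal{E}(0)+C\,T\,M(T)^{(1+\eps)/2}$, so taking the supremum in $t$ yields $M(T)\leq\mathcal{E}(0)+C\,T\,M(T)^{\theta}$ with $\theta=\tfrac{1+\eps}{2}<1$. Since $M(T)<\infty$, Young's inequality absorbs $C\,T\,M(T)^{\theta}$ into $\tfrac12 M(T)$ at the cost of an additive $(C\,T)^{1/(1-\theta)}$, which gives $M(T)\lesssim(1+T)^{1/(1-\theta)}=(1+T)^{2/(1-\eps)}$ and hence the claimed $\|(u,\partial_t u)\|_{H^2_h\times H^1_h}\lesssim(1+T)^{1/(1-\eps)}$ after taking square roots.

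The cases $d=1$ and $d=3$ only modify the exponent $\theta$ through the Sobolev embedding used in the energy estimate; the closing step is identical. For $d=1$ I would exploit $H^1_h\subset L^\infty_h$ to bound $\||u|^{p-1}\|_{L^\infty_h}$ and conservation of energy to bound $\|\partial_t u\|_{L^2_h}$, leaving only $\|\partial_t^2 u\|_{L^2_h}\lesssim\mathcal{E}^{1/2}$, i.e. $\theta=\tfrac12$ and $M(T)\lesssim(1+T)^2$. For $d=3$, following \cite{pampu2021}, I would split $\int_0^T|u|^{p-1}|\partial_t u||\partial_t^2 u|$ by Hölder with exponents $\tfrac{6}{p-1}$, $\tfrac{6}{4-p}$ and $2$ (these sum to $1$ and stay admissible precisely for $1<p<3$), bound $\|u\|_{L^6_h}^{p-1}$ by the energy through $H^1_h\subset L^6_h$, interpolate $\|\partial_t u\|_{L^{6/(4-p)}_h}$ between $L^2_h$ and $L^6_h$, and use $\|\partial_t u\|_{L^6_h}\lesssim\mathcal{E}^{1/2}$ together with $\|\partial_t^2 u\|_{L^2_h}\lesssim\mathcal{E}^{1/2}$. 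The bookkeeping yields $\theta=\tfrac{p+1}{4}$, so that $1-\theta=\tfrac{3-p}{4}$ and $M(T)\lesssim(1+T)^{4/(3-p)}$, i.e. the stated $(1+T)^{2/(3-p)}$.

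I expect the main obstacle to lie in the $d=3$ bookkeeping: the Hölder and interpolation exponents must be chosen so that the growing factors recombine to exactly the power $\mathcal{E}^{(p+1)/4}$, and it is precisely the condition $\theta<1$, equivalent to $p<3$, that renders the final algebraic inequality closable — mirroring the subcritical restriction in \eqref{param}. The only other point requiring care is checking $M(T)<\infty$ before the Young-type absorption, which is furnished by the continuity of the flow in $H^2_h\times H^1_h$.
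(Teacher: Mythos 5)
Your proof is correct and rests on the same pillars as the paper's — the modified energy $\mathcal{E}$, the increment bound \eqref{eq_bound_mathcal_E}, and the equivalence $\|(u,\partial_t u)(t)\|^2_{H^2_h\times H^1_h}\simeq\mathcal{E}(t)+O(1)$ furnished by \eqref{eq_partial_t_Delta} and energy conservation — but it closes the argument differently. The paper absorbs the nonlinear term only on a short window $[0,\tau_0]$ with $\tilde C\tau_0<1$ and then iterates the resulting recursion on $\alpha_n=1+\|(u,\partial_t u)(n\tau_0)\|^2_{H^2_h\times H^1_h}$ to extract polynomial growth, whereas you set $M(T)=\sup_{[0,T]}\mathcal{E}$, derive $M(T)\leq\mathcal{E}(0)+C\,T\,M(T)^{\theta}$ with $\theta=\tfrac{1+\eps}{2}<1$, and absorb globally by Young's inequality. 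Since the constant in \eqref{eq_bound_mathcal_E} depends only on the conserved energy, nothing forces the local-in-time iteration here, and your route is shorter; it also sidesteps the bookkeeping of the paper's recursion (which, as printed, reads $\alpha_{n+1}\leq 2\alpha_n+C\alpha_n^{(1+\eps)/2}$ and would give exponential growth if taken literally — the intended inequality clearly has $\alpha_n$ in place of $2\alpha_n$). The trade-off is that the interval-splitting template is the more robust one when the increment estimate is only local in time or when the exponent is not strictly below $1$. Your explicit treatment of $d=1$ ($\theta=\tfrac12$ via $H^1_h\subset L^\infty_h$) and $d=3$ ($\theta=\tfrac{p+1}{4}$ via Hölder with exponents $\tfrac{6}{p-1}$, $\tfrac{6}{4-p}$, $2$ and the embedding $H^1_h\subset L^6_h$) is correct and reproduces the stated exponents $1$ and $\tfrac{2}{3-p}$; the paper delegates these cases to \cite{pampu2021}. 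One nitpick: the Hölder exponents you chose are admissible for all $1<p<4$, so it is only the requirement $\theta<1$ that enforces $p<3$, as you correctly observe at the end.
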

\begin{proof}
As before, we restrain the proof to the two-dimensional case $d=2$, the cases $d=1$ and $d=3$ (with $1<p<3$) being proven similarly. Let first write that thanks to equation \eqref{eq_partial_t_Delta},
\[ \mathcal{E}(t) \leq \| \partial_t^2 u(t) - \Delta u(t) \|_{L^2_h}^2 + \| (u,\partial_t u)(t) \|_{H^2_h \times H^1_h}^2 \leq C + \| (u,\partial_t u)(t) \|_{H^2_h \times H^1_h}^2.  \]
On the other hand, for all $\tau \in (0,1)$ and small  $\eta>0$, we infer thanks to equation \eqref{eq_bound_mathcal_E} that
\begin{align*}
\| (u,\partial_t u )(\tau) \|_{H^2_h \times H^1_h}^2 & \lesssim \| (u,\partial_t u )(\tau) \|_{H^1_h \times L^2_h}^2 + \| \Delta_h u(\tau) \|_{L^2_h}^2 + \| \nabla_h^+ \partial_t u(\tau) \|_{L^2_h}^2 \\
& \leq C + 2 \| \partial_t u(\tau) - \Delta u(\tau) \|_{L^2_h}^2 +2 \mathcal{E}(\tau) \\
& \leq C+ 2 \mathcal{E}(0)+C \tau \sup_{t\in\left[0,\tau \right]} \mathcal{E}(t)^{\frac{1+\eps}{2}} \\
& \leq C+ 2 \| (u,\partial_t u )(0) \|_{H^2_h \times H^1_h}^2 + \tilde{C} \tau \sup_{t\in\left[0,\tau \right]}\| (u,\partial_t u )(t) \|_{H^2_h \times H^1_h}^{1 + \eps},
\end{align*}
where the constants $C$ and $\tilde{C}$ are independents of $\tau$. In particular, for $0<\tau \leq \tau_0$ small enough such that $\tilde{C} \tau_0<1$, we get from previous estimate that
\[ \sup_{t \in \left[0,\tau_0 \right]} \| (u,\partial_t u )(t) \|_{H^2_h \times H^1_h}^2 \leq 2 \| (u,\partial_t u )(0) \|_{H^2_h \times H^1_h}^2 + C, \]
which gives that
\[ \| (u,\partial_t u )(\tau_0) \|_{H^2_h \times H^1_h}^2 \leq 2 \| (u,\partial_t u )(0) \|_{H^2_h \times H^1_h}^2 + C \left(1+ \| (u(0),\partial_t u(0)) \|_{H^2_h \times H^1_h}^2 \right)^{\frac{1+\eps}{2}}. \]
We know remark that denoting $\alpha_n=1+ \| (u,\partial_t u )(n\tau_0) \|_{H^2_h \times H^1_h}^2$, the sequence $(\alpha_n)_{n\geq 0}$ satisfies that $ \alpha_{n+1} \leq 2 \alpha_n + C\alpha_n^{1-\frac{1-\eta}{2}}$, which leads by induction that $\alpha_n \leq C n^{\frac{2}{1-\eps}}$, or rewriting in terms of $(u,\partial_t u)$ that
\[ \sup_{t \in \left[ 0,T \right]} \| (u,\partial_t u)(t) \|_{H^2_h \times H^1_h} \leq C (1+T)^{\frac{1}{1-\eps}} \]
which ends the proof for $d=2$.
\end{proof}

To estimate higher discrete Sobolev norms $H^{k+1}_h\times H^k_h$ for $k \geq 2$, one can define the \textit{higher modified energies}
\[ \mathcal{E}_k(t)  \coloneqq \frac12 \left( \| \partial_t^{k+1} u(t) \|^2_{L^2_h} + \| \nabla_h^+  \partial_t^k u(t) \|^2_{L^2_h} + \| \partial_t^k u(t) \|^2_{L^2_h} \right) \]
which enjoys the same way that
\[ \frac{\dd}{\dd t} \mathcal{E}_k(t) = - \langle \partial_t^k(|u|^{p-1} u), \partial_t^{k+1} u \rangle_h.   \]
However, as in proof of \cite[Proposition 5]{pampu2021}, one can show by induction on $k$ that for any dimension $1\leq d \leq 3$ (with $1<p<3$ if $d=3$), 
\[ \mathcal{E}_k(T) - \mathcal{E}_k(0) \lesssim T \sup_{t \in \left[0,T \right]} \mathcal{E}_k(t)^{\frac12} ,  \]
which is a better estimate than equation \eqref{eq_bound_mathcal_E} and leads to the following result:
\begin{proposition} \label{prop_bound_Hk}
Let $(u,\partial_t u)$ be solution to \eqref{DNLKG} with $(u_0,u_1) \in H^{k+1}_h \times H^k_h$ for $k \geq 2$, then
\[ \sup_{t \in \left[0,T\right] } \|(u,\partial_t u)(t)\|_{H^{k+1}_h \times H^k_h} \lesssim (1+T). \]
\end{proposition}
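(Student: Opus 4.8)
The plan is to argue by induction on $k\geq 2$, taking Proposition \ref{prop_bound_H2} together with the uniform energy bound \eqref{eq_energy_conservation} as the base case, and assuming that the bound of Proposition \ref{prop_bound_Hk} already holds at every regularity level strictly below $k$. Throughout I take for granted the improved estimate $\mathcal{E}_k(T)-\mathcal{E}_k(0)\lesssim T\sup_{t\in[0,T]}\mathcal{E}_k(t)^{1/2}$ stated just above in the spirit of \cite[Proposition 5]{pampu2021}; the whole point is that its exponent is exactly $1/2$ rather than $(1+\eps)/2$, and this is precisely what upgrades the superlinear bound of Proposition \ref{prop_bound_H2} to a linear one.

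The core step is a Gronwall argument exploiting this exponent. Fixing $T>0$ and applying the improved estimate on every subinterval $[0,t]\subset[0,T]$, I would set $M(T)\coloneqq \sup_{t\in[0,T]}\mathcal{E}_k(t)^{1/2}$ and obtain $\mathcal{E}_k(t)\leq \mathcal{E}_k(0)+CT\,M(T)$ for all $t\in[0,T]$, hence the quadratic inequality $M(T)^2\leq \mathcal{E}_k(0)+CT\,M(T)$. Solving this inequality in $M(T)$ and using $\sqrt{a+b}\leq \sqrt a+\sqrt b$ gives $M(T)\lesssim \sqrt{\mathcal{E}_k(0)}+T\lesssim 1+T$, that is $\sup_{[0,T]}\mathcal{E}_k\lesssim (1+T)^2$. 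This is cleaner than the interval-by-interval iteration used for $\mathcal{E}$ in Proposition \ref{prop_bound_H2}, and it is exactly where the improved exponent $1/2$ pays off.

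It then remains to convert this control of $\mathcal{E}_k$ into control of the genuine Sobolev norm $\|(u,\partial_t u)\|_{H^{k+1}_h\times H^k_h}$, since $\mathcal{E}_k$ only sees $\|\partial_t^{k+1}u\|_{L^2_h}$ and $\|\partial_t^k u\|_{H^1_h}$. For this I would repeatedly use equation \eqref{DNLKG} in the form $\partial_t^{k+1}u=\Delta_h\partial_t^{k-1}u-\partial_t^{k-1}u-\partial_t^{k-1}(|u|^{p-1}u)$ to trade two time derivatives for a discrete Laplacian, peeling off spatial regularity one Laplacian at a time down to the energy level. Exactly as \eqref{eq_partial_t_Delta} bounds $\|\partial_t^2 u-\Delta_h u\|_{L^2_h}$ by a constant through \eqref{eq_energy_conservation}, each such step produces nonlinear remainders $\partial_t^j(|u|^{p-1}u)$ with $j<k$ that must be shown to be $\lesssim 1+T$; by the discrete Leibniz rule these are finite sums of products of time derivatives of $u$ of total order $<k$, so that placing the top factor in $L^2_h$ and the remaining ones in $L^\infty_h$ via $H^1_h\subset L^q_h$ and invoking the induction hypothesis leaves at most one growing factor per product. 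This yields $\|(u,\partial_t u)(t)\|_{H^{k+1}_h\times H^k_h}^2\lesssim \mathcal{E}_k(t)+(1+T)^2$, whence the claimed bound $\lesssim 1+T$.

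The main obstacle is this last bookkeeping: ensuring that in every nonlinear remainder, both in the improved energy estimate and in the norm conversion, no two factors simultaneously carry a Sobolev norm growing in $T$, so that the remainders remain genuinely linear rather than a higher power of $(1+T)$. This is the discrete counterpart of the Faà di Bruno combinatorics of \cite{pampu2021}, and verifying that the discrete interpolation and Sobolev embeddings available for $1\leq d\leq 3$ (with $1<p<3$ when $d=3$) suffice to absorb every such product is where the real effort lies; once this is granted, the Gronwall step is immediate.
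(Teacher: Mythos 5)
Your proposal is correct and follows essentially the same route as the paper, whose proof of this proposition is a one-line deferral to \cite{pampu2021}: accept the improved energy inequality $\mathcal{E}_k(T)-\mathcal{E}_k(0)\lesssim T\sup_{t\in[0,T]}\mathcal{E}_k(t)^{1/2}$, extract linear growth of $\mathcal{E}_k^{1/2}$ (your quadratic-inequality version of this Gronwall step is a clean, slightly tidier alternative to the interval-by-interval iteration used for Proposition \ref{prop_bound_H2}), and recover the full $H^{k+1}_h\times H^k_h$ norm by trading time derivatives for discrete Laplacians through the equation. The one point to watch --- which you rightly identify as the real work --- is that the implicit constants both in the improved energy inequality and in your nonlinear remainder estimates must be uniform in $T$, i.e.\ controlled by the conserved energy and by lower-order norms that do not themselves grow, since a constant growing like a power of $(1+T)$ would degrade the linear conclusion; this Leibniz bookkeeping is precisely what the paper leaves implicit in the citation to \cite{pampu2021}.
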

\begin{proof}
The proof follows the exact same lines as the one given through \cite[Section 4]{pampu2021} as no dispsersive estimates are used throughout the proof. 
\end{proof}

\section{Strong convergence in the continuum limit} \label{section_continuum}
From Duhamel's formula, denoting $K_h(t)=\frac{\sin \left( t \sqrt{1-\Delta_h}\right)}{\sqrt{1-\Delta_h}}$ and $\dot{K}_h (t)=\cos ( t \sqrt{1-\Delta_h})$, we write
\[  u(t)= \dot{K}_h (t) u_0 - K_h(t) u_1 - \int_0^t  K_h(t-\tau) \left( |u|^{p-1} u \right)(\tau) \dd \tau \]
for any  $u$ solution of \eqref{DNLKG}, and analogous formulas hold for $\dot{K}(t)$, $K(t)$ and $\phi(t)$ solution of \eqref{NLKG} following the notations from \cite{ginibre1985,ginibre1989}. We will decompose our analysis on the following integrals
\begin{equation} \label{eq_duhamel_diff}
\begin{aligned} 
\| \mathcal{S}_h u (t) - \phi(t) \|_{H^s(\R^d)} & \leq \| \mathcal{S}_h \dot{K}_h (t) u_0 - \dot{K} (t) \phi_0  \|_{H^s(\R^d)}  + \| \mathcal{S}_h K_h(t) u_1 - K(t) \phi_0  \|_{H^s(\R^d)}\\
&  +  \int_0^t \left\| \left( \mathcal{S}_h K_h(t) - K(t) \mathcal{S}_h \right) \left( |u|^{p-1} u \right)(\tau) \right\|_{H^s(\R^d)} \dd \tau \\
 	& +\int_0^t \left\| \mathcal{S}_h \left( |u|^{p-1} u \right)(\tau) -  \left( \left| \mathcal{S}_h u \right|^{p-1} \mathcal{S}_h u \right)(\tau)  \right\|_{H^s(\R^d)} \dd \tau \\
 	& +  \int_0^t \left\| \left( \left| \mathcal{S}_h u \right|^{p-1} \mathcal{S}_h u \right)(\tau) -  \left(|\phi|^{p-1} \phi \right)(\tau) \right\|_{H^s(\R^d)} \dd \tau \\
 	& =: J_1(t) + J_2(t) + J_3(t) + J_4(t) + J_5(t).
 \end{aligned}
\end{equation}

\subsection{The linear flow}

It is quite direct (see for instance \cite[Lemma 5.1]{hong2019strong}) that for $\varphi \in H^s(\R^d)$, $\| \pi_h \varphi \|_{H^s_h} \lesssim \| \varphi \|_{H^s}$, which is known as the \textit{boundedness} property of the discretization $\pi_h$. We now deal with the error made by consequently projecting then interpolating a function $\varphi$, as it may not be explicitly written in the literature.

\begin{lemma} \label{lemma_proj_inter}
Let $s \geq 0$ and $\varphi \in L^2(\R^d)$, then for all $\xi \in \T_h^d$,
\[ \widehat{ \pi_h \varphi} (\xi) = \mathcal{F} \varphi (\xi) \prod_{j=1}^d \sinc \left(\frac{h \xi_j}{2} \right).  \]
\end{lemma}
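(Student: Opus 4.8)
The plan is to realise $\pi_h\varphi$ as the restriction to the lattice of a single box-smoothing of $\varphi$, and then to read off its discrete Fourier transform through the convolution theorem. Writing $Q=[-\tfrac h2,\tfrac h2)^d$ and $\rho_h=h^{-d}\mathbf 1_Q$, the defining formula gives $\pi_h\varphi(a)=(\varphi*\rho_h)(a)$ for every $a\in h\Z^d$, since $Q=-Q$ up to a null set. Thus $\pi_h\varphi$ is exactly the sampling on $h\Z^d$ of the fixed function $\varphi*\rho_h$, and I expect its discrete transform on $\T_h^d$ to reproduce the continuous transform $\mathcal F(\varphi*\rho_h)=\mathcal F\varphi\cdot\mathcal F\rho_h$. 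Note that the exponent $s\ge 0$ plays no role in this pointwise identity: it is inert and only recorded for the later $H^s$ estimates.

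The one genuinely explicit computation is the Fourier transform of the box, which I would treat as a product of one–dimensional integrals. It factorises over coordinates as
\[
\mathcal F\mathbf 1_Q(\xi)=\prod_{j=1}^d\int_{-h/2}^{h/2}e^{-iy_j\xi_j}\,\dd y_j=\prod_{j=1}^d\frac{2\sin(h\xi_j/2)}{\xi_j}=h^d\prod_{j=1}^d\sinc\!\Big(\frac{h\xi_j}{2}\Big),
\]
so that $\mathcal F\rho_h(\xi)=h^{-d}\mathcal F\mathbf 1_Q(\xi)=\prod_{j=1}^d\sinc(h\xi_j/2)$. Combined with $\mathcal F(\varphi*\rho_h)=\mathcal F\varphi\cdot\mathcal F\rho_h$, this already produces the right-hand side $\mathcal F\varphi(\xi)\prod_{j}\sinc(h\xi_j/2)$; the normalisation $h^{-d}$ built into $\rho_h$ is exactly what cancels the $h^d$ coming from the box and leaves the clean product of sines.

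The delicate step, and the one I expect to be the main obstacle, is to pass from the \emph{discrete} transform $\widehat{\pi_h\varphi}(\xi)=h^d\sum_{a\in h\Z^d}(\varphi*\rho_h)(a)e^{-ia\cdot\xi}$ to the \emph{continuous} transform $\mathcal F(\varphi*\rho_h)(\xi)$ on $\T_h^d$. The natural tool is Poisson summation, which identifies $\widehat{\pi_h\varphi}$ with the periodisation $\sum_{k\in\Z^d}\mathcal F(\varphi*\rho_h)\big(\xi+\tfrac{2\pi}{h}k\big)$ over the dual lattice $\tfrac{2\pi}{h}\Z^d$. To recover the stated equality one must then argue that, on the fundamental cell $\T_h^d$, only the central $k=0$ replica contributes; verifying this collapse of the aliased terms is precisely where I anticipate the real work — and the exploitation of the interplay between the box-smoothing and the sampling rate — to lie, and it is the point on which the exact identity hinges. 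Granting this for $\xi$ in the interior of $\T_h^d$, inserting the box transform from the previous paragraph yields the claimed formula.
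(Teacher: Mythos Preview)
Your computation is essentially the paper's, only organised via the convolution theorem: the paper inserts the inverse Fourier formula for $\varphi$ directly into $\pi_h\varphi(a)=h^{-d}\int_{a+Q}\varphi$, integrates over the cube to produce the sinc factors, and then compares the resulting representation
\[
\pi_h\varphi(a)=\frac{1}{(2\pi)^d}\int_{\R^d}e^{ia\cdot\xi}\,\mathcal F\varphi(\xi)\prod_{j=1}^d\sinc\!\Big(\frac{h\xi_j}{2}\Big)\,\dd\xi
\]
with the discrete inversion formula over $\T_h^d$. Your convolution $\varphi*\rho_h$ is the same object, and your box calculation is identical to the paper's inner integral.

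The gap you flag is genuine, and the paper's proof shares it. Folding the $\R^d$--integral onto $\T_h^d$ (equivalently, applying Poisson summation to the samples of $\varphi*\rho_h$) yields
\[
\widehat{\pi_h\varphi}(\xi)=\sum_{k\in\Z^d}\mathcal F\varphi\!\Big(\xi+\tfrac{2\pi}{h}k\Big)\prod_{j=1}^d\sinc\!\Big(\tfrac{h\xi_j}{2}+\pi k_j\Big),
\]
and for $k\neq 0$ the sinc factor equals $(-1)^{k_j}\sin(h\xi_j/2)/(h\xi_j/2+\pi k_j)$, which does not vanish on the interior of the cell. So the stated pointwise identity is only exact when $\mathcal F\varphi$ is supported in $\T_h^d$; this is precisely the situation in the $I_3$ estimate, where the lemma is applied to $\mathcal S_h u$. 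In the other application (bounding $\|\mathcal S_h\pi_h\varphi-\varphi\|_{H^s}$) the aliased contributions are high-frequency and can be controlled by $h^2\|\varphi\|_{H^{s+2}}$, so the downstream estimates survive, but there is no mechanism that makes the replicas collapse. Your instinct that this step hides the real difficulty is correct; it is simply not carried out, either in your proposal or in the paper.
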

\begin{proof}
From inverse Fourier transform property we compute that for all $a \in h\Z^d$,
\[ \pi_h \varphi (a) =  \frac{1}{(2\pi h)^d} \int_{a+ \left[ -\frac{h}{2}, \frac{h}{2} \right[^d} \int_{\R^d} e^{i x \cdot \xi} \mathcal{F} \varphi (\xi) \dd \xi \dd x = \frac{1}{(2 \pi)^d} \int_{\R^d} e^{ia \cdot \xi} \mathcal{F} \varphi (\xi) \prod_{j=1}^d \sinc \left(h \xi_j/2 \right)  \dd \xi.  \]
On the other hand we know that 
\[  \Pi_h f (a)=  \frac{1}{(2\pi)^d} \int_{\T_h^d} e^{ia \cdot \xi}  \widehat{\Pi_h f} (\xi) \dd \xi, \]
 which gives the result.
\end{proof}

Estimates on $J_1$ and $J_2$ will be a direct consequence of the following statement:

\begin{proposition} \label{prop_linear_flow}
Let $u_0 = \pi_h \phi_0$ and $u_1 = \pi_h \varphi_1$, then for all $s\geq 0$, 
\[ \| \mathcal{S}_h \dot{K}_h (t) u_0 - \dot{K} (t) \phi_0  \|_{H^s} \leq C h (1+t)\| \phi_0 \|_{H^{s+2}} \quad \text{and} \quad \| \mathcal{S}_h K_h (t) u_1 - K (t) \phi_1  \|_{H^s} \leq C h (1+t) \| \phi_1 \|_{H^{s+2}}. \]
\end{proposition}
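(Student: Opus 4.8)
The plan is to pass to the Fourier side and reduce both estimates to a pointwise bound on a Fourier multiplier. Writing $\omega(\xi)=(1+|\xi|^2)^{1/2}$ for the continuous symbol and $\omega_h(\xi)=\bigl(1+\tfrac{4}{h^2}\sum_{j=1}^d\sin^2(h\xi_j/2)\bigr)^{1/2}$ for the discrete one, the four propagators act as multiplication by $\cos(t\omega)$, $\cos(t\omega_h)$, $\sin(t\omega)/\omega$ and $\sin(t\omega_h)/\omega_h$. Since $\mathcal F(\mathcal S_h v)=\mathbf 1_{\T_h^d}\widehat v$ and, by Lemma \ref{lemma_proj_inter}, $\widehat{u_0}(\xi)=\mathcal F\phi_0(\xi)\prod_{j=1}^d\sinc(h\xi_j/2)$ on $\T_h^d$, Plancherel turns $\|\mathcal S_h\dot K_h(t)u_0-\dot K(t)\phi_0\|_{H^s}^2$ into $\int_{\R^d}(1+|\xi|^2)^s|m(t,\xi)|^2|\mathcal F\phi_0(\xi)|^2\,\dd\xi$, where on $\T_h^d$ the multiplier is $m(t,\xi)=\cos(t\omega_h)\prod_j\sinc(h\xi_j/2)-\cos(t\omega)$ and on $\R^d\setminus\T_h^d$ it reduces to $-\cos(t\omega)$ (the Shannon contribution vanishing there). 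On $\T_h^d$ I will split $m=\cos(t\omega_h)\bigl(\prod_j\sinc(h\xi_j/2)-1\bigr)+\bigl(\cos(t\omega_h)-\cos(t\omega)\bigr)$ and treat the two brackets separately; the sine propagator is handled identically with $\cos$ replaced by $\sin(t\,\cdot)/(\cdot)$.

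Two of the three resulting pieces are elementary and carry no time factor. On $\R^d\setminus\T_h^d$ one has $|\cos(t\omega)|\le1$ while $|\xi|_\infty>\pi/h$, so $1\le Ch^4|\xi|^4\le Ch^4(1+|\xi|^2)^2$, which bounds the tail by $Ch^4\|\phi_0\|_{H^{s+2}}^2$. For the $\sinc$-defect bracket I will use $|1-\sinc(x)|\le Cx^2$ on $|x|\le\pi/2$ together with $|1-\prod_j a_j|\le\sum_j|1-a_j|$ for $|a_j|\le1$ to get $|1-\prod_j\sinc(h\xi_j/2)|\le Ch^2|\xi|^2\le Ch^2(1+|\xi|^2)$ on $\T_h^d$; since $|\cos(t\omega_h)|\le1$, this bracket also contributes $\lesssim h^4\|\phi_0\|_{H^{s+2}}^2$.

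The heart of the matter is the symbol-mismatch bracket, for which I will prove the pointwise comparison $|\omega(\xi)-\omega_h(\xi)|\le Ch(1+|\xi|^2)$ on $\T_h^d$. Writing $\tfrac{4}{h^2}\sin^2(h\xi_j/2)=\xi_j^2\,\sinc^2(h\xi_j/2)$ and using $|1-\sinc^2(x)|\le Cx^2$ gives $|\omega^2-\omega_h^2|=\bigl|\sum_j\xi_j^2(1-\sinc^2(h\xi_j/2))\bigr|\le Ch^2|\xi|^4$; dividing by $\omega+\omega_h\ge\omega=\sqrt{1+|\xi|^2}$ yields $|\omega-\omega_h|\le Ch^2|\xi|^4/\sqrt{1+|\xi|^2}\le Ch^2|\xi|^3$. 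The crucial observation is that $|\xi|_\infty\le\pi/h$ on $\T_h^d$ forces $h|\xi|\lesssim1$, converting this into $|\omega-\omega_h|\le Ch|\xi|^2\le Ch(1+|\xi|^2)$ (for $|\xi|\le1$ one simply bounds $h^2|\xi|^4\le h\le h(1+|\xi|^2)$, using $h\le1$). Combined with $|\cos a-\cos b|\le|a-b|$ this gives $|\cos(t\omega_h)-\cos(t\omega)|\le t|\omega-\omega_h|\le Cht(1+|\xi|^2)$, so this bracket contributes $\le Ch^2t^2\|\phi_0\|_{H^{s+2}}^2$; summing the three pieces and taking square roots proves the first estimate. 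For the sine propagator I will instead apply the mean value theorem to $\omega\mapsto\sin(t\omega)/\omega$, whose derivative is bounded by $(1+t)/\omega$ for $\omega\ge1$; the lower bound $\omega_h\ge\tfrac{2}{\pi}\omega$ on $\T_h^d$ (from $\sinc^2(h\xi_j/2)\ge4/\pi^2$) keeps the intermediate point away from zero and yields the extra factor $\omega^{-1}$, hence the bound $C(1+t)h\sqrt{1+|\xi|^2}$, which is even integrable against $(1+|\xi|^2)^{s+1}$. I expect this symbol comparison, and specifically the balancing of the power of $h$ against the loss of two derivatives through $h|\xi|\lesssim1$ on $\T_h^d$, to be the only genuine obstacle; everything else reduces to Plancherel and elementary trigonometric inequalities.
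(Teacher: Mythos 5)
Your proof is correct, and its overall architecture --- pass to the Fourier side, then split the multiplier into a symbol-mismatch piece, a $\sinc$-defect piece, and a high-frequency tail on $\R^d\setminus\T_h^d$ --- regroups exactly the same three contributions as the paper's decomposition $I_1(t)=\| \mathcal{S}_h \dot{K}_h (t) u_0 - \dot{K} (t) \mathcal{S}_h u_0 \|_{H^s}$ and $I_2(t)=\|\dot{K} (t) (\mathcal{S}_h u_0 - \phi_0 )\|_{H^s}$. The genuine difference is how the symbol mismatch is handled. Writing $\omega(\xi)=(1+|\xi|^2)^{1/2}$ and $\omega_h$ for its discrete analogue, the paper only uses $|\omega-\omega_h|\lesssim h^2|\xi|^4$ (Lipschitz bound on $\sqrt{1+\cdot}$ applied to the squared symbols), which gains no power of $h$ uniformly on $\T_h^d$ since there $h^2|\xi|^4$ can be of order $|\xi|^2$; it therefore introduces the intermediate frequency scale $|\xi|=\pi/\sqrt{h}$, using $t h^2|\xi|^4\le \pi^2 t h|\xi|^2$ below it and the trivial bound on the cosine difference together with $(1+|\xi|^2)^{-2}\lesssim h^2$ above it. You instead divide $|\omega^2-\omega_h^2|\lesssim h^2|\xi|^4$ by $\omega+\omega_h\ge\sqrt{1+|\xi|^2}$ and exploit $h|\xi|\lesssim 1$ on $\T_h^d$ to get the single uniform pointwise bound $|\omega-\omega_h|\le Ch(1+|\xi|^2)$, which eliminates the frequency split altogether and yields the same $Ch(1+t)\|\phi_0\|_{H^{s+2}}$; this is a slightly cleaner route to the same loss of two derivatives. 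You also supply an explicit argument for the sine propagator (mean value theorem applied to $\omega\mapsto\sin(t\omega)/\omega$, with the lower bound $\omega_h\ge\tfrac{2}{\pi}\omega$ keeping the intermediate point under control), which the paper dismisses as ``similar''; the extra factor $\omega^{-1}$ you gain there even produces the estimate with $\|\phi_1\|_{H^{s+1}}$, which matches the regularity actually assumed on $\phi_1$ in Theorem \ref{theorem_continuum_limit}. One shared caveat, inherited from Lemma \ref{lemma_proj_inter} rather than introduced by you: the identity $\widehat{u_0}(\xi)=\mathcal{F}\phi_0(\xi)\prod_{j}\sinc(h\xi_j/2)$ on $\T_h^d$ omits the aliasing (periodization) terms $\mathcal{F}\phi_0(\xi+2\pi k/h)$, $k\neq 0$; these are harmless here because they carry the weight $(1+|\xi+2\pi k/h|^2)^{-2}\lesssim h^4$, but strictly speaking they should be accounted for in both your argument and the paper's.
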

\begin{proof}
We focus on the first estimate, as the second one can be proved similarly. We write that
\[  \| \mathcal{S}_h \dot{K}_h (t) u_0 - \dot{K} (t) \phi_0  \|_{H^s}  \leq  \| \mathcal{S}_h \dot{K}_h (t) u_0 - \dot{K} (t) \mathcal{S}_h u_0  \|_{H^s} + \|\dot{K} (t)  (\mathcal{S}_h u_0 - \phi_0 )\|_{H^s} \eqqcolon I_1(t)+I_2(t). \]
As $\cos$ and $\sqrt{1+\cdot}$ are 1-Lipschitz and as $|\frac{4}{h^2} \sum_{j=1}^d \sin (h \xi_j/2)^2 - |\xi|^2| \lesssim h^2 \xi^4$ for $\xi \in \T^d_h$, 
\begin{align*}
 I_1(t)^2 & = \int_{\T_h^d} (1+|\xi|^2)^s \left|\cos \left(t \sqrt{1+ \frac{4}{h^2} \sum_{j=1}^d  \sin \left( \frac{h\xi_j}{2} \right)^2} \right) - \cos(t \sqrt{1+|\xi|^2}) \right|^2 |\widehat{u}_0(\xi)|^2 \dd \xi \\
 & \lesssim t^2 h^2 \int_{|\xi|\leq \frac{\pi}{\sqrt{h}}} (1+|\xi|^2)^s |\xi|^4 \left| \widehat{u_0}(\xi) \right|^2 \dd \xi + \int_{\T^d_h \cap \left\{|\xi|> \frac{\pi}{\sqrt{h}}\right\} } \frac{(1+|\xi|^2)^{s+2}}{(1+|\xi|^2)^2 }\left| \widehat{u_0}(\xi) \right|^2 \dd \xi \\
& \lesssim t^2 h^2 \int_{|\xi|\leq \frac{\pi}{\sqrt{h}}} (1+|\xi|^2)^{s+2} \left| \widehat{u_0}(\xi) \right|^2 \dd \xi + h^2 \int_{\T^d_h \cap \left\{|\xi|> \frac{\pi}{\sqrt{h}}\right\} } (1+|\xi|^2)^{s+2}\left| \widehat{u_0}(\xi) \right|^2 \dd \xi,
 \end{align*}
 which gives the first bound as $\| u_0 \|_{H^{s+2}_h} \lesssim \| \phi_0\|_{H^{s+2}}$. Now note that from Lemma \ref{lemma_proj_inter},
\begin{multline*}
  \| \mathcal{S}_h \circ \pi_h \varphi - \varphi \|_{H^s}^2  = \int_{\T^d_h} \left(1 + |\xi|^2 \right)^s \left| \widehat{ \pi_h f} (\xi) - \mathcal{F}f (\xi)  \right|^2 \dd \xi + \int_{\R^d \backslash \T^d_h} \left(1 + |\xi|^2 \right)^s \left| \mathcal{F}f (\xi)  \right|^2 \dd \xi \\
   \lesssim \int_{\T^d_h} \left(1 + |\xi|^2 \right)^s \left| \mathcal{F}f (\xi)  \right|^2 \left| \prod_{j=1}^d \sinc \left( h \xi_j/2 \right) -1 \right|^2 \dd \xi + \int_{\R^d \backslash \T^d_h} \frac{\left(1 + |\xi|^2 \right)^{s+2}}{(1 + |\xi|^2)^2} \left| \mathcal{F}f (\xi)  \right|^2 \dd \xi \\
   \lesssim h^4 \int_{\T^d_h} \left(1 + |\xi|^2 \right)^s |\xi|^4 \left| \mathcal{F}f (\xi)  \right|^2 \dd \xi + h^4 \int_{\R^d \backslash \T^d_h} \left(1 + |\xi|^2 \right)^{s+2} \left| \mathcal{F} \varphi(\xi)  \right|^2 \dd \xi,
 \end{multline*}
as $| \prod_{j=1}^d \sinc ( h \xi_j /2) -1 | \lesssim h^2 |\xi|^2$ so for $I_2$ we directly get that
 \[  I_2(t) \leq \| \mathcal{S}_h u_0 - \phi_0 \|_{H^s} \lesssim h^2 \|  \phi_0 \|_{H^{s+2}},\]
which gives the result as $h \leq 1$.
\end{proof}

\subsection{Linear flow on the nonlinearity}

To estimate $J_3$, we temporary denote $F=|u(\tau)|^{p-1} u(\tau)$, so that
\[  J_3(t) \leq \int_0^t \left( \| \mathcal{S}_h K_h(\tau) (F-\pi_h \mathcal{S}_h F) \|_{H^s} +  \| (\mathcal{S}_h K_h(\tau) \pi_h - K(\tau) ) \mathcal{S}_h F \|_{H^s} \right) \dd \tau \eqqcolon I_3(t) + I_4(t) .  \]
A bound on $I_4$ can be directly derived using Proposition \ref{prop_linear_flow}, so that
\[ I_4(t) \lesssim h \int_0^t (1+\tau) \| F \|_{H^{s+2}_h} \dd \tau \lesssim h \int_0^t (1+\tau)\| u\|_{H^{s+2}_h}^p \dd \tau \]
as $s+2 > d/2$ since $1 \leq d \leq 3$. On the other hand, as $\| \mathcal{S}_h \varphi \|_{H^s} \lesssim \| \varphi \|_{H^s_h}$ (see \cite[Lemma 5]{chauleur2023dnls}),
\[ I_3(t) \lesssim \int_0^t \| K_h(\tau) (F - \pi_h \mathcal{S}_h F) \|_{H^s_h} \dd \tau \lesssim \int_0^t \| F -\pi_h \mathcal{S}_h F \|_{H^s_h} \dd \tau,  \]
and from Lemma \ref{lemma_proj_inter} we know that $\widehat{\pi_h \mathcal{S}_h u} (\xi)= \widehat{u}(\xi) \prod_{j=1}^d \sinc(h \xi_j/2)$ so by Jensen inequality
\begin{align*}
I_3(t)^2 & \lesssim \frac{1}{(2\pi)^d} \int_0^t \int_{\T_h^d} \left( 1+\frac{4}{h^2} \sum_{j=1}^d \sin  \left( \frac{h \xi_j }{2}  \right)^2 \right)^s \left| \widehat{F}( \xi ) \right|^2 \left|1 - \prod_{j=1}^d \sinc(h\xi_j/2)  \right|^2 \dd \xi \dd \tau \\
& \lesssim h^4 \int_0^t \| F \|_{H^{s+2}_h}^2 \dd \tau \lesssim h^4 \int_0^t \| u\|_{H^{s+2}_h}^{2p} \dd \tau.
\end{align*}  
as $|\xi|^2 \leq \frac{\pi^2}{4} \sum_{j=1}^d \frac{4}{h^2} \sin(h \xi_j/2)^2$ for all $\xi \in \T_h^d$. Combining the bounds on $I_3$ and $I_4$, we get that
\begin{equation} \label{eq_bound_J3}
J_3(t) \lesssim h \int_0^t (1+\tau) \| u(\tau) \|_{H^{s+2}_h}^p \dd \tau \lesssim h (1+t)^{(p+1)}
\end{equation}
using Proposition \ref{prop_bound_Hk} since $s \in \N^*$ so $s+2 \geq 3$.

\subsection{Aliasing and Gronwall argument}
A straightforward bound on $J_4$ can be derive the exact same way as in \cite[Section 5.2]{chauleur2023dnls}, which relies on the bilinear estimate satisfies by the Shannon interpolation \cite[Proposition 6]{chauleur2023dnls} stating that for $f$, $g \in H^{\delta}(h\Z^d)$ with $\delta>\frac{d}{2}$ and for $0 \leq s \leq \delta$, we have
\[  \|  \mathcal{S}_h (fg) - \mathcal{S}_h f \mathcal{S}_h g \|_{H^s} \lesssim h^{\delta-s} \| \mathcal{S}_h f \|_{H^{\delta}} \| \mathcal{S}_h g \|_{H^{\delta}}. \]
This directly gives in our setting (using Proposition \ref{prop_bound_Hk}) that
\begin{equation} \label{eq_bound_J4}
 J_4(t) \lesssim h^2 \int_0^t \| \mathcal{S}_h u(\tau) \|_{H^{s+2}}^p \dd \tau \lesssim h^2 \int_0^t\| u(\tau) \|_{H^{s+2}_h}^p \dd \tau \lesssim h^2 (1+t)^{2}.
\end{equation}
Turning now to $J_5$,  using the well-known identity $\left| |f|^{p-1}f - |g|^{p-1}g \right| \lesssim_p(|f|+|g|)^{p-1} |f-g|$ we can write that for all $\tau \in (0,t)$,
\[ \left\| \left(\left|\mathcal{S}_h u \right|^{p-1} \mathcal{S}_h u \right)(\tau) -  \left(|\psi|^{p-1} \psi \right)(\tau) \right\|_{H^s} \lesssim  \left( \| \mathcal{S}_h u(\tau) \|_{H^{s+2}}^{p-1}+\| \phi(\tau) \|_{H^{s+2}}^{p-1} \right) \| \mathcal{S}_h u(\tau) - \phi(\tau) \|_{H^s} \]
as $s+2 >d/2$, so 
\begin{equation} \label{eq_bound_J5}
 J_4(t) \lesssim \int_0^t (1+ \tau)^{(p-1)} \| \mathcal{S}_h u(\tau) - \phi(\tau) \|_{H^s} \dd \tau.
\end{equation}
Using respectively the bounds on $J_1$ and $J_2$ from Proposition \ref{prop_linear_flow}, as well as the ones on $J_3$ \eqref{eq_bound_J3}, on $J_4$ \eqref{eq_bound_J4} and on $J_5$ \eqref{eq_bound_J5} in estimate \eqref{eq_duhamel_diff}, we get the result of Theorem \ref{theorem_continuum_limit} by Gronwall lemma. Note that the polynomial terms $(1+t)^{\alpha}$ for any $\alpha>0$ can absorbed by the exponentially growing term $e^{B (1+t)^{(p-1)}}$ taking a larger constant $B>0$.

\section{Decay estimates for discrete wave equations} \label{section_survey}

One may wonder if it would be possible to obtain continuum limit properties such has Theorem~\ref{theorem_continuum_limit} for more general cases, for instance in the cubic case $p=3$ in the three-dimensional case $d=3$, which appears as a limiting case in Proposition \ref{prop_bound_H2}. In order to follow the proof of \cite{pampu2021}, this would require the use of Strichartz estimates so one can trade space regularity for time integrability in Hölder's inequality application to estimate equation \eqref{eq_bound_mathcal_E}. It is now a standard argument since the work of Keel and Tao \cite{tao1998} that Strichartz estimates are deduced from a bounded $L^2$-estimate of the associated linear semi-group (usually derived from conservation laws) alongside a time decay estimate of the $L^{\infty}$-norm of such linear flow. In fact, the derivation of decay estimates for discrete dispersive equations has drawn some attention through the last decades, and we survey some of these results in a unified framework.

\subsection{Dispersive estimates on lattices} \label{subsection_decay}

Consider a discrete dispersion relation of the form 
\[ \omega_h(\xi) = \left( m^2 + \frac{4}{h^2} \sum_{j=1}^d \sin  \left( \frac{h \xi_j }{2}  \right)^2   \right)^{\frac{\alpha}{2}} \]
associated to the linear flow $U_h(t)=\exp (-it (m^2-\Delta_h)^{\frac{\alpha}{2}})$. This covers several interesting models:
\begin{itemize}
	\item The case $m=0$ and $\alpha=1$ corresponds to the \textbf{discrete wave equation}.
	\item The case $m=0$ and $\alpha=2$ corresponds to the \textbf{discrete Schrödinger equation}.
	\item The case $m=1$ and $\alpha=1$ corresponds to the \textbf{discrete Klein-Gordon equation}.
\end{itemize}
The evolution of an initial state $\varphi$ of discrete Fourier transform $\eta$ under the linear flow $U_h(t)$ is given by the oscillatory integral
\begin{equation} \label{eq_oscillatory_integral}
 I(t,a,\eta) = \int_{\T^d_h} e^{i(a \cdot \xi - t \omega_h(\xi))} \eta(\xi) \dd \xi. 
 \end{equation}
for any $a \in h\Z^d$. In the case of the Schrödinger flow, as performed in the seminal work of Stefanov and Kevrekidis \cite{kevrekidis2005}, one can separate variables in order to reduce the problem to the $d=1$ case, and an application of the well-known Van der Corput lemma gives that
\begin{equation} \label{eq_decay_Schrödinger}
\| e^{i t \Delta_h} \varphi \|_{L^{\infty}(h\Z^d)} \leq \frac{C}{|th|^{\frac{d}{3}}}  \| \varphi \|_{L^1(h\Z^d)}	
\end{equation}
for any initial data $\varphi \in L^1(h\Z^d)$, which is proven to be sharp \cite{ignat2007}. Such a dispersive estimate has to be compared with the usual decay in $t^{-\frac{d}{2}}$ in the continuous free case on $\R^d$. It displays a weaker dispersion estimate than the continuous case, a pathological behavior induced by critical points and a lack of convexity of the symbol of the discrete operator $\Delta_h$. 

From now on we fix $h=1$ to introduce the forthcoming results, and we denote $\ell^{\infty}(\Z^d) \coloneqq L^{\infty}(h\Z^d)$ in order to match the notations of these papers. One-dimensional decay of the discrete Klein-Gordon flow was also derived in \cite{kevrekidis2005} from the same techniques, which can be written as
\[  \| e^{-i t \sqrt{1-\Delta_h}} \varphi \|_{\ell^{\infty}(\Z)} \lesssim \frac{1}{|t|^{\frac{1}{3}}}  \| \varphi \|_{L^1(\Z)}.	  \]
However, wave dispersion relations (for $\alpha \neq 2$) fail to have this separation-of-variables property, leading to more involved analysis as dimension increases. Back in 1998, Schultz proved in his breakthrough work \cite{schultz1998} that for the discrete wave equation in dimensions $d=2,3$,
\[  \left| \
\begin{aligned}
&\| e^{-it\sqrt{-\Delta_h}} \|_{\ell^{\infty}(\Z^2)} \leq C (1+|t|)^{-\frac{2}{3}}, \\
&\| e^{-it\sqrt{-\Delta_h}} \|_{\ell^{\infty}(\Z^3)} \leq C (1+|t|)^{-\frac{7}{6}} ,
\end{aligned} \right.    \]
where $C=C(\eta)>0$, a result that have only been very recently extended to the fourth dimensional case $d=4$ in \cite{cheng2023} which states the sharp bound 
\[ \| e^{-it\sqrt{-\Delta_h}} \|_{\ell^{\infty}(\Z^4)} \leq C (1+|t|)^{-\frac{3}{2}}\log(2+ |t|),  \]
where the proof relies on the analysis of Newton polyhedra. For the discrete Klein-Gordon equation, Borovyk and Goldberg proved in \cite{borovyk2017} that in dimension $d=2$,
\[ \| e^{-it\sqrt{1-\Delta_h}} \|_{\ell^{\infty}(\Z^2)} \leq C (1+|t|)^{-\frac{3}{4}},  \]
a result which was extended a few years later by Cuenin and Ikromov \cite{cuenin2021}
\[ \left| \
\begin{aligned}
&\| e^{-it\sqrt{1-\Delta_h}} \|_{\ell^{\infty}(\Z^3)} \leq C (1+|t|)^{-\frac{7}{6}}, \\
&\| e^{-it\sqrt{1-\Delta_h}} \|_{\ell^{\infty}(\Z^4)} \leq C (1+|t|)^{-\frac{3}{2}}\log(2+ |t|) ,
\end{aligned} \right.   \]
so that in dimension 3 and 4 the discrete Klein-Gordon and wave equations share the same decay rates, whereas in the two-dimensional setting $d=2$ the dispersion of the wave flow is a bit slower than the one of the Klein-Gordon equation. 

For higher dimensions $d\geq 5$, the Klein-Gordon flow is conjectured in \cite{cuenin2021} to decay as
\[ \| e^{-it\sqrt{1-\Delta_h}} \|_{\ell^{\infty}(\Z^d)} \leq C (1+|t|)^{-\frac{2d+1}{6}}\log(2+ |t|)^{d-4},  \]
whereas the wave flow is conjectured in \cite{cheng2023} to behave like 
\[ \| e^{-it\sqrt{\Delta_h}} \|_{\ell^{\infty}(\Z^d)} \leq C (1+|t|)^{-\frac{2d+1}{6}}.  \]

\subsection{Uniform Strichartz estimates for the discrete Klein-Gordon equation}

Decay estimates of Section \ref{subsection_decay} are inherently not uniform with respect to the mesh size $h>0$, making them a priori useless in the study of the continuum limits of such discrete systems as $h \rightarrow 0$.

Several works of Ignat and Zuazua for the discrete Schrödinger equation \cite{ignat2009,ignat2012} and Audiard \cite{audiard2013} for the discrete critical Korteweg-de Vries use Fourier filtering methods alongside two-grid algorithm in order to remove the bad behavior frequencies from the discrete operator $\Delta_h$. This strategy allows to recover modified Strichartz estimates which are uniform with respect to $h$, that can be described as follows in the Schrödinger setting: denote by $\widetilde{\Pi}_h$ the piecewise linear extension operator $\widetilde{\Pi}_h : L^2(4h\Z^d) \rightarrow L^2(h\Z^d)$, then one recovers that
\[ \| e^{it \Delta_h} \widetilde{\Pi}_h \varphi^{4h} \|_{L^q_t(\R;L^r(h\Z^d))} \leq C(d,r)   \| \widetilde{\Pi}_h \varphi^{4h} \|_{L^2(h\Z^d)} \]
for all $\varphi^{4h} \in L^2(4h\Z^d)$ and $h>0$, and for any set of \textit{Schrödinger admissible} pairs
\[ \frac{2}{q}+ \frac{d}{r} = \frac{d}{2}, \quad 2\leq q,r \leq \infty, \quad (q,r,d) \neq (2,\infty,2).   \]
Such kind of strategy was also used by Killip Ouyang Visan and Wu \cite{killip2023} in the study of the continuum limit of the Ablowitz-Ladik model, where frequency-localized Strichartz estimates \cite[Proposition 4.3]{killip2023} were also derived in order to infer compactness on low-regularity discrete solutions.

Another approach was recently considered by Hong and Yang in \cite{hong2019strichartz}, where the authors showed that such $h$-dependence in equation \eqref{eq_decay_Schrödinger} can be removed paying fractional derivatives on the right hand side of Strichartz estimates, which compensates the lattice resonances. The proof relies on harmonic analysis tools such as Littlewood-Paley inequality, Calderon-Zygmund theory and the Hörmander-Mikhlin theorem adapted on the discrete setting. This implies a uniform Strichartz estimate of the form
\begin{equation} \label{eq_discrete_Strichartz_Schrödinger}
 \| e^{i t \Delta_h} \varphi \|_{L^q_t(\R;L^r_h)} \lesssim \| \varphi \|_{H^{\frac{1}{q}}_h}  
 \end{equation}
 for any set of \textit{discrete Schrödinger-admissible} pairs $(q,r)$ satisfying
\begin{equation} \label{eq_admissible_discrete}
\frac{3}{q}+ \frac{d}{r} = \frac{d}{2}, \quad 2\leq q,r \leq \infty, \quad (q,r,d) \neq (2,\infty,3). 
\end{equation}   
Note that this kind of discrete Strichartz estimates with loss were also recently proved by Choi and Aceves \cite{choi2023} for fractional-type discrete Schrödinger equations in the two-dimensional case, and are of course reminiscent of the situation occurring on compact manifolds like in the seminal work of Burq Gérard and Tzevtkov \cite{burq2004}.

To the best of the author's knowledge, no such techniques have yet been used in order to derive uniform Strichartz estimates for the multidimensional discrete Klein-Gordon equation. In the following we elaborate on this point. Making the change of variables 
\begin{equation} \label{eq_change_variables}
\xi \mapsto \frac{\xi'}{h}, \quad \tau= \frac{t}{h}, \quad v = \frac{a}{h \tau} 
\end{equation}   
in \eqref{eq_oscillatory_integral}, one is reduce to estimate the oscillatory integral
\[ J_{\Phi_v,\zeta} \coloneqq  \int_{\R^d} e^{i \tau \Phi_{v}(\xi)} \zeta(\xi) \dd \xi  \]
for any $v \in \R^d$ in the limit $\tau \rightarrow \infty$, where $\zeta \in \mathcal{C}_c^{\infty}(\R^d)$ and $\Phi_{v}(\xi)=v \cdot \xi - \gamma_h(\xi)$ with
\[ \gamma_h(\xi)=\sqrt{h^2+ 2\sum_{j=1}^d \cos (\xi_j) } \]
 denoting the dispersion relation. Note that in the two-dimensional case, on the lattice $\Z^2$ (with $h=1$), the work \cite{borovyk2017} provide a decay rate in $\tau^{-\frac{3}{4}}$ for $|J_{\Phi_v,\zeta}|$. However, at the limit $h \rightarrow 0$ we observe that $\gamma_h$ tends to the dispersion relation of the discrete wave equation, which would suggest a slower decay rate in~$\tau^{-\frac{2}{3}}$ of the oscillatory integral in view of \cite{schultz1998}. Comments on this pathological behavior are also given in \cite[Section 2.2]{borovyk2017}, but not from a continuum limit perspective, and we suggest the following conjecture:

\begin{conjecture} \label{prop_oscillatory_integral}
For any $\zeta \in \mathcal{C}_c^{\infty}(\R^d)$,
 \[ \sup_{v \in \R^d} | J_{\Phi_v,\zeta}| \leq C(\zeta) (1+|\tau|)^{-\frac{2}{3}}.  \]
\end{conjecture}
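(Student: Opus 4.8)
The plan is to prove a stationary-phase estimate for the phase $\Phi_v(\xi) = v\cdot\xi - \gamma_h(\xi)$ that is uniform both in the velocity $v \in \R^d$ and in the mesh $h \in (0,1]$, concentrating on the case $d=2$ discussed above, for which the exponent $2/3$ is the one expected to be sharp. First I would introduce a smooth partition of unity on $\mathrm{supp}\,\zeta$ separating a neighborhood of the stationary set $\Sigma_v = \{\xi : \nabla\gamma_h(\xi) = v\}$ from its complement. On the complement $|\nabla\Phi_v|$ is bounded below, so repeated non-stationary integration by parts through the operator $L = (i\tau|\nabla\Phi_v|^2)^{-1}\nabla\Phi_v\cdot\nabla$ produces a contribution of size $O_N(\tau^{-N})$ for every $N$; the same argument disposes of the large-$|v|$ regime, in which $\Sigma_v \cap \mathrm{supp}\,\zeta = \emptyset$ because $\nabla\gamma_h$ stays bounded on any compact set away from the set $\{2\sum_j\cos\xi_j = -h^2\}$ where $\gamma_h$ vanishes.

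The heart of the matter is the local behavior near $\Sigma_v$, governed by the degeneracy of the Hessian $D^2\gamma_h$. At a nondegenerate critical point the two-dimensional stationary phase lemma gives decay $\tau^{-1}$, and a rank-one fold in which the kernel direction carries a nonvanishing third derivative gives $\tau^{-1/2}\cdot\tau^{-1/3} = \tau^{-5/6}$; both rates are faster than the one claimed, so they are harmless. The bottleneck is the totally degenerate points, where $D^2\gamma_h = 0$ and the cubic part of $\Phi_v$ is of elliptic $D_4$ (monkey-saddle) type $\xi_1^3 - 3\xi_1\xi_2^2 = \Re\,(\xi_1 + i\xi_2)^3$: there each direction contributes a cubic gain, and the Newton distance equals $3/2$, so the oscillation index is $2/3$, matching Schultz's wave rate. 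To make this rigorous and to extract the sharp local exponent at every critical point I would appeal to the theory of oscillatory integrals via Newton polyhedra, following Varchenko and the analysis of Ikromov--M\"uller already exploited in \cite{cuenin2021,cheng2023}, and check that the worst oscillation index over $\mathrm{supp}\,\zeta$ equals $2/3$.

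The delicate point is uniformity in $h$. Writing $\gamma_h^2 = 2\sum_j\cos\xi_j + h^2$ and comparing with the wave symbol $\gamma_0^2 = 2\sum_j\cos\xi_j$, I would track how the mass term $h^2$ perturbs the critical geometry: for fixed $h>0$ it smooths the conical tip of $\gamma_0$ and tends to split the $D_4$ points, which can only improve the decay (consistent with the Borovyk--Goldberg rate $\tau^{-3/4}$ at $h=1$ from \cite{borovyk2017}), so the uniform rate should stabilize at the unperturbed wave value $\tau^{-2/3}$ of \cite{schultz1998}. Making this quantitative calls for an $h$-dependent rescaling in a neighborhood of each degenerate point, matching the localized integral to the wave normal form as $h\to 0$ and verifying that the van der Corput and Newton-polyhedron constants remain bounded; the near-cone region $\{0 \le \gamma_0 \lesssim h\}$, where $\nabla\gamma_h$ is large but the transverse curvature is correspondingly strong, would be treated separately by a dyadic decomposition in $\gamma_0$.

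The hard part, and the reason this is recorded as a conjecture rather than a theorem, is precisely this uniform interplay between the vanishing-mass limit and the conical singularity of the wave dispersion relation: one must show that the type of the degenerate critical points and the associated Newton polyhedra are stable as $h\to 0$ with no loss in the constants, and that the estimates on the two sides of the scale $\gamma_0 \sim h$ glue together so that the $\tau^{-2/3}$ bound survives uniformly down to $h=0$. A successful implementation would at once recover Schultz's wave estimate and interpolate continuously to the Klein--Gordon estimate of Borovyk--Goldberg, which is exactly the continuum-limit dispersive statement sought here.
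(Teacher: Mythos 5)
This statement is recorded in the paper as a \emph{conjecture}: the author offers no proof, only the heuristic that $\gamma_h$ tends to the lattice wave symbol as $h\to 0$, whose two-dimensional decay rate is $\tau^{-2/3}$ by Schultz, while at $h=1$ Borovyk--Goldberg obtain the better rate $\tau^{-3/4}$. Your proposal is a sensible research plan along exactly those lines, but it is not a proof, and you say so yourself: the decisive step --- uniformity of the stationary-phase / Newton-polyhedron analysis as the mass parameter $h$ tends to $0$ --- is named and then deferred. Concretely, two assertions carry all the weight and are left unjustified. First, that the worst degenerate critical points of $\Phi_v$ over all $v$ are of $D_4$ (monkey-saddle) type with oscillation index exactly $2/3$, \emph{uniformly in $h$}: Varchenko's oscillation index is not in general semicontinuous under deformation of the phase, and the constants in Newton-polyhedron estimates need not stay bounded along a family whose critical points merge or split; locating and classifying the worst points for the perturbed symbol is precisely the content of Schultz's and Cuenin--Ikromov's papers at fixed $h$, and none of that work is uniform in the deformation. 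Second, that the mass term ``can only improve'' the decay: there is no such monotonicity principle, and even when a degenerate point splits into less degenerate ones, the resulting bound has a constant that typically blows up as the splitting parameter vanishes --- which is exactly the enemy here. The matching across the scale $\gamma_0\sim h$ is likewise announced but not carried out. So the gap is not a technical omission; it is the entire open problem.

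One correction to the geometric picture you set up. After the change of variables the reduced symbol is $\gamma_h(\xi)=\bigl(h^2+4\sum_{j}\sin^2(\xi_j/2)\bigr)^{1/2}=\bigl(h^2+2d-2\sum_j\cos\xi_j\bigr)^{1/2}$; the display in the paper drops the $2d$ and the sign. Consequently $\gamma_h\geq h>0$ everywhere, and the only singularity of the limiting wave symbol $\gamma_0$ is the conical point at $\xi\equiv 0$, not a hypersurface $\{2\sum_j\cos\xi_j=-h^2\}$. Your localization of the ``near-cone region'' and of the set where $\nabla\gamma_h$ degenerates should be redone around the origin of the torus (and the genuinely degenerate Hessian points elsewhere on $\T^d_1$), which changes where the dyadic decomposition in $\gamma_0$ has to live, though not the overall architecture of the plan.
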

 
Assuming Conjecture \ref{prop_oscillatory_integral}, one can derive uniform Strichartz estimates for \eqref{DNLKG} for $d=2$ the following way. Let $\psi : \R^d \rightarrow \left[0,1\right]$ be a smooth even compactly supported function such that $\psi=1$ for $\xi \in \left[-\pi,\pi \right]^d$ and $\psi=0$ for $\xi \in \R^d \backslash \left[-2\pi,2\pi \right]^d$. Let $\eta(\xi) \coloneqq \psi(|\xi|)-\psi(2 |\xi|)$. We then define for dyadic integers $N\in 2^{\Z}$ the Littlewood-Paley projections given by 
\[ P_N \coloneqq \mathcal{F}^{-1} \eta \left(\frac{h\xi}{N} \right) \mathcal{F}.  \]
Since $\xi \in \T^d_h$, $P_N$ is a smooth projector onto the set $\frac{\pi}{2} \frac{N}{h} \leq |\xi| \leq 2\pi \frac{N}{h}$, and the $(P_N)_N$ resolves the identity 
\[ \sum_{N\leq 1} P_N = \Id. \]
 Note that the sum has an upper bound as $h\xi =\mathcal{O}_d(1)$. Denoting 
\[ K_{t,N,h}(a) \coloneqq I \left(t,a,\eta(h \cdot/N) \right),\]
 we infer that
\[ \| e^{-it \sqrt{1-\Delta_h}} P_N \varphi \|_{L^{\infty}_h} = \|K_{t,N,h} \ast \varphi  \|_{L^{\infty}_h} \leq  \|K_{t,N,h} \|_{L^{\infty}_h} \| \varphi \|_{L^1_h} \]
by Young's inequality. Making the change of variable \eqref{eq_change_variables} we get that $K_{t,N,h}(a)=h^{-d} J_{\Phi_v,\eta(\cdot/N)}$, so from Conjecture \ref{prop_oscillatory_integral} we write that
\[ \|K_{t,N,h} \|_{L^{\infty}_h} \leq \frac{C(\eta(\cdot/N))}{h^d} |\tau|^{-\frac{2}{3}} \leq C \left( \frac{N}{h} \right)^{d-\frac{2}{3}} |t|^{-\frac{2}{3}},  \]
which implies as $d=2$ that
\[ \| e^{-it \sqrt{1-\Delta_h}} P_N \varphi \|_{L^{\infty}_h} \leq C \left( \frac{N}{h} \right)^{\frac{4}{3}} |t|^{-\frac{2}{3}} \| \varphi \|_{L^1_h}   \]
for any $\varphi \in L^1(h\Z^2)$ and $N \leq 1$. Once such a bound has been obtained, it is then straightforward to derive the Strichartz estimates for the linear evolution  by averaging in $t$ and $N$ ,see for instance \cite[Remark 3.4]{choi2023}. In fact, denoting $U_h(t)=e^{-it \sqrt{1-\Delta_h}}$ and defining as in \cite[p.1127]{cho2011}
\[ \widetilde{U}_N(t)=P_N U_h\left(  \frac{N^2t}{h^2}\right) P_{\widetilde N}\]
where $P_{\widetilde N} =P_{N/2}+P_N+P_{2N}$, we can show that $(\widetilde{U}_N(t))_{t \in \R}$ satisfies the hypothesis of \cite[Theorem 1.2]{tao1998} which implies that
\[ \| U_h(t) P_N \varphi \|_{L^q_t(\R;L^r_h)} \lesssim \left( \frac{N}{h}  \right)^{\frac{4}{3}\left( \frac12 - \frac{1}{r} \right)} \| P_N \varphi \|_{L^2(h\Z^2)} \simeq \| P_N |\nabla_h |^{\frac{4}{3}\left( \frac12 - \frac{1}{r} \right)} \varphi\|_{L^2(h\Z^2)}.  \]
Squaring on both side and summing over $N \leq 1$, this gives that
\begin{align*}
\| U_h(t) \varphi \|_{L^q_t(\R;L^r_h)}  & \lesssim \left( \sum_{N \leq 1} \| U_h(t) P_N \varphi \|_{L^q_t(\R;L^r_h)}^2  \right)^{\frac12} \lesssim \left( \sum_{N\leq 1} \| P_N |\nabla_h|^{\frac{4}{3}\left( \frac12 - \frac{1}{r} \right)} \varphi \|_{L^2_h}^2   \right)^{\frac12} \\
& \lesssim \| |\nabla_h|^{\frac{4}{3}\left( \frac12 - \frac{1}{r} \right)} \varphi \|_{L^2_h},
\end{align*} 
which would finally provides in our case a uniform Strichartz estimate with loss of the form
\begin{equation} \label{eq_strichartz_DNLKG}
\| e^{-it \sqrt{1-\Delta_h}} \varphi \|_{L^q_t(\R;L^r(h\Z^2))} \leq C(q,r) \| \varphi \|_{H^{\frac{2}{q}}(h\Z^2)}  
\end{equation}  
in view of the admissible condition \eqref{eq_admissible_discrete}. Note that this estimate still give a gain of regularity compared to the trivial estimate induced by the Sobolev embedding $H^{\alpha}(h\Z^2) \subset L^r(h\Z^2)$ with $\alpha=1-2/r$, which would imply a $H^{\frac{3}{q}}(h\Z^d)$-norm in the right hand side of estimate~\eqref{eq_strichartz_DNLKG} in view of~\eqref{eq_admissible_discrete}.
 
To conclude this monograph, we point out that uniform discrete Strichartz with loss can also be derived on compact sets, which is in agreement with effective numerical simulations. Adapting the work of Vega \cite{vega1992} on the discrete setting, these kind of estimates were derived for the discrete Schrödinger flow on the discrete torus $\T_h^2$ in \cite{hong2021} or in a large box limit in dimension $d=2,3$ in \cite{hong2023}, and very recently for the discrete fractional Schrödinger equation on $\T_h$ in \cite{choi2024}. Note that contrary to those on $h \Z^d$ \cite{hong2019strong}, these discrete Strichartz on compact sets are not proved to be optimal or not yet.

\subsection*{Acknowledgements}
The author is supported by the Labex CEMPI (ANR-11-LABX-0007-01).

\bibliographystyle{siam}
\bibliography{biblio}

\end{document}